\date{}
\def\@citex[#1]#2{\if@filesw\immediate\write\@auxout{\string\citation{#2}}\fi
  \def\@citea{}\@cite{\@for\@citeb:=#2\do
    {\@citea\def\@citea{,\linebreak[0]\hskip0pt plus .2em}%
      \@ifundefined{b@\@citeb}%
    {{\bf ?}\@warning{Citation `\@citeb' on page \thepage\space undefined}}%
      \hbox{\csname b@\@citeb\endcsname}}}{#1}}
\newtheorem{theorem}{Theorem}[section]
\newtheorem{remark}{Remark}[section]
\newtheorem{example}{Example}[section]
\newtheorem{rule-def}[theorem]{Rule}
\numberwithin{equation}{section}
\begin{document}
\title{Optimal homotopy analysis method with Green's function for a class of nonlocal elliptic boundary value problems}
\author{Randhir Singh \thanks{Corresponding author. E-mail:randhir.math@gmail.com}\\
$\rm $\small {Department of Mathematics}\\
\small {Birla Institute of Technology Mesra,
 Ranchi-835215, India}} \maketitle{}
\begin{abstract}
\noindent
In this paper, we present the optimal homotopy analysis method (OHAM) with Green's function technique to acquire accurate numerical
solutions for the nonlocal elliptic problems. We first transform the nonlocal boundary value problems into an equivalent integral equation, and then use an OHAM with convergence control parameter $c_0$. To demonstrate convergence and accuracy characteristics of the OHAM method, we compare the OHAM and Adomian decomposition method (ADM) with Green's function. The numerical experiments confirm the reliability of the approach as it handles such nonlocal elliptic differential equations without imposing limiting assumptions that could change the physical structure of the solution. We also discuss the convergence and error analysis of proposed method.  In summary: $(i)$ the present approach  does not require any additional computational work for unknown constants unlike ADM and  VIM \cite{khuri2014variational} $(ii)$  guarantee of convergence $(iii)$ flexibility on choice of  initial guess of solution and $(iv)$ useful analytic tool to investigate a class of nonlocal elliptic boundary value problems.

\end{abstract}
\textbf{Keyword}: Optimal homotopy analysis method;  Nonlinear nonlocal elliptic boundary value problems; Convergence analysis; Adomian decomposition method; Integral equations.

\section{Introduction}
We first consider a class of linear nonlocal elliptic boundary value problems \cite{cannon2008numerical,khuri2014variational}:
\begin{align}\label{sec1:e1}
\left\{
  \begin{array}{ll}
\displaystyle  -\alpha\bigg(\int\limits_{0}^{1} y(s)ds \bigg) y''(x)=h(x),~~~~~x\in (0,1)\\
y(0)=a,~~~~y(1)=b,~~~~~a,b \in [0,\infty).
\end{array} \right.
\end{align}
This problem \eqref{sec1:e1} may be called a class of  linear nonlocal  boundary value problem since the coefficient of the derivative of the unknown solution $y$ depends upon the integral of $y$ itself, which in turn depends on the whole domain $(0,1)$ rather than on a single point.

We also study a class of nonlinear nonlocal elliptic nonlinear boundary value problems \cite{cannon2011numerical,khuri2014variational,themistoclakis2015numerical}:
\begin{align}\label{sec1:e2}
\left\{
  \begin{array}{ll}
\displaystyle  -\alpha\bigg(\int\limits_{0}^{1} y(s)ds \bigg) y''(x)+y^{2n+1}(x)=0,~~~~~x\in (0,1)\\
y(0)=a,~~~~y(1)=b,~~~~~a,b \in [0,\infty),~~ n\in 0\cup\mathds{Z}^{+}
\end{array} \right.
\end{align}
The problem  \eqref{sec1:e2} may be called a class of  nonlinear nonlocal  boundary value problem. Such nonlocal  boundary value problems arise in modeling various physical questions such as the aero-elastic behavior of suspended flexible cables subjected to icing conditions and wind action  \cite{luongo1998non,luongo2008continuous,themistoclakis2015numerical} or the dust production and diffusion in the fusion devices \cite{themistoclakis2015numerical}. For details on such applications of nonlocal  boundary value problems  see \cite{stanczy2001nonlocal,khuri2014variational} and the references therein.

In  \cite{cannon2008numerical}, the existence and uniqueness of solution of \eqref{sec1:e1} was discussed by using a fixed point theorem and then  the numerical solutions were obtained via a finite difference scheme. In  \cite{cannon2011numerical}, Cannon and
Galiffa developed a numerical method for  \eqref{sec1:e2}, in which they established a priori estimates and the existence and uniqueness
of the solution to the nonlinear auxiliary problem via the Schauder fixed point theorem. They proved the existence and uniqueness to the problem and analyzed a discretization of  problem  \eqref{sec1:e2} and showed that a solution to the nonlinear difference problem exists and is unique and that the numerical procedure converges with error $O(h)$. In \cite{khuri2014variational}, Khuri and  Wazwaz  applied the variational iteration method to a class
of  nonlocal, elliptic boundary value problems \eqref{sec1:e1} and \eqref{sec1:e2} and established  uniform convergence
of the scheme. In \cite{themistoclakis2015numerical}, Themistoclakis and Vecchio provided the sufficient conditions for the unique solvability and a more general convergence theorem for  \eqref{sec1:e2} and suggested different iterative procedures to handle the nonlocal nonlinearity of the discrete problem.

In this paper, we present the OHAM with Green's function technique to acquire accurate numerical
solutions for the nonlocal elliptic problems given in \eqref{sec1:e1} and \eqref{sec1:e2}. We first transform the given nonlocal boundary value problems into an equivalent integral equation, and then use OHAM  to obtain accurate numerical
solutions. To demonstrate convergence and accuracy characteristics of the OHAM, a number of test examples are included. We compare the present method and ADM with Green's function which confirms the accuracy and superiority  of the OHAM. The numerical experiments confirm the reliability of the approach as it handles such nonlocal elliptic differential equations without imposing limiting assumptions that could change the physical structure of the solution. We also discuss the convergence and error analysis of proposed method.  In summary: $(i)$ the present approach  does not require any additional computational work for unknown constants unlike ADM and  VIM \cite{khuri2014variational} $(ii)$  guarantee of convergence $(iii)$ flexibility on choice of an  initial guess of solution and $(iv)$ useful analytic tool to investigate a class of nonlocal elliptic boundary value problems.

\section{Homotopy analysis method with Green's function}
We consider a general  form of  \eqref{sec1:e1} and \eqref{sec1:e2} nonlocal elliptic boundary value problems as
\begin{align}\label{sec2:e1}
\left\{
  \begin{array}{ll}
\displaystyle \alpha (p) y''(x)=f(y(x))~~~~~x\in (0,1)\\
y(0)=a,~~~~y(1)=b,~~~~\hbox{where}~~~~p=\displaystyle\int\limits_{0}^{1} y(s)ds,
\end{array} \right.
\end{align}
where $\alpha(p)$ is a continuous positive function. By setting  $f(y(x))\equiv -h(x)$ or $f(y(x)) \equiv y^{2n+1}(x)$, we obtain the original boundary value problems  \eqref{sec1:e1} or \eqref{sec1:e2}.

Following  Singh et al. \cite{singh2014new,singh2015approximate}, we transform nonlocal elliptic boundary value problems \eqref{sec2:e1} into an the equivalent integral equation as
\begin{align}\label{sec2:e2}
 y(x)=a+(b-a)x+\frac{1}{\alpha (p)} \int\limits_{0}^{1} G(x,s) f(y(s))ds,
\end{align}
where $G(x,s)$ is given by
\begin{align} \label{sec2:e3}
 G(x,\xi)=\left  \{
  \begin{array}{ll}
   x (s-1), & \hbox{$ x\leq s$}, \\
  s (x-1) , & \hbox{$s\leq x$}.
\end{array}
\right.
\end{align}
According to homotopy analysis method \cite{liao2007general,liao2009series}, we use  $q \in[0, 1]$ as an embedding parameter, the general zero-order deformation equation is constructed as
\begin{align}\label{sec2:e4}
(1-q)[\phi(x;q)-y_0(x)]=q\; c_0\;  N[\phi(x;q)],
\end{align}
where  $y_0(x)$ denotes an initial guess,  $c_0\neq0$ is convergence-controller parameter,   $\phi(x;q)$ is an unknown function and $N[\phi(x;q)]$ is given by
\begin{align}\label{sec2:e5}
N[\phi(x;q)]:=\phi(x;q)-\big[a+(b-a)x\big]-\frac{1}{\alpha(p[\phi(x;q)])} \int\limits_{0}^{1}G(x,s)  f(\phi(s;q))ds=0.
\end{align}
The zero-order deformation \eqref{sec2:e4} becomes $\phi(x;0)=y_0(x)$ at  $q = 0$ and
it becomes $N[\phi(x;1)]=0$ at $q = 1$ which is exactly the same as the original problem \eqref{sec2:e1} provided that $\phi(x;1)= y(x)$.

Expanding $\phi(x;q)$ in a Taylor series with respect to the parameter $q$, we obtain
\begin{align}\label{sec2:e7}
\phi(x;q)=y_0(x)+\sum_{k=1}^{\infty} y_k(x) q^{k},
\end{align}
where $y_k(x)$ is given by
\begin{align}\label{sec2:e8}
y_k(x)=\frac{1}{k!}\frac{\partial^k }{\partial q^k}[\phi(x;q)]\bigg|_{q=0}.
\end{align}
The series \eqref{sec2:e7}  converges for $q = 1$ if $c_0\neq0$ is chosen properly and it  reduces to
 \begin{align}\label{sec2:e9}
\phi(x;1)\equiv y(x)=\sum_{k=0}^{\infty}  y_k(x),
\end{align}
which will be one of solutions of the problem \eqref{sec2:e2}.

Defining the vector $\overrightarrow{y_k} = \{y_0 (x), y_1 (x), \ldots, y_k (x)\}$ and differentiating \eqref{sec2:e4},  $k$ times with respect to the parameter $q$, dividing it by $k!$,
setting subsequently $q = 0,$ the $k$th-order deformation equation is obtained
\begin{align}\label{sec2:e10}
y_k(x)-\chi_{k}\; y_{k-1}(x)=c_0\ R_k(\overrightarrow{y}_{k-1},x),
\end{align}
where
\begin{align}\label{sec2:e11}
\chi_{k}=\left   \{
  \begin{array}{ll}
    0, & \hbox{$k\leq1$} \\
     1, & \hbox{$k>1$},
\end{array}
\right.
\end{align}
and
\begin{align*}
R_k(\overrightarrow{y}_{k-1},x)&=\frac{1}{(k-1)!} \bigg\{\frac{\partial^{k-1}}{\partial q^{k-1}} N[\phi(x;q)]\bigg\}_{q=0}\\
                  &=\frac{1}{(k-1)!} \bigg \{ \frac{\partial^{k-1}} {\partial q^{k-1}} N\bigg( \displaystyle \sum_{m=0}^{\infty} y_m(x) q^{m}\bigg)\bigg\}_{q=0}\\
                  &=y_{k-1}(x)-(1-\chi_{k})\big[a+(b-a)x\big]-\frac{1}{\alpha(p_{k-1})}\int\limits_{0}^{1}G(x,s) \mathcal{H}_{k-1} ds.
\end{align*}
Thus we have
\begin{align}\label{sec2:e12}
R_k(\overrightarrow{y}_{k-1},x)=y_{k-1}(x)-(1-\chi_{k})\big[a+(b-a)x\big]-\frac{1}{\alpha(p_{k-1})}\int\limits_{0}^{1}G(x,s) \mathcal{H}_{k-1} ds
\end{align}
where $p_{k}$ and $\mathcal{H}_{k}$, are given by
\begin{align}\label{sec2:e13}
p_{k}=\displaystyle \int\limits_{0}^{1} \bigg(\sum_{m=0}^{k} y_m(x) q^{m}\bigg) ds,~~~\mathcal{H}_{k}=\frac{1}{k!}\frac{\partial^{k} }{\partial q^{k}} \bigg\{ f\bigg(\sum_{m=0}^{k} y_m(x) q^{m}\bigg)\bigg\}_{q=0}.
\end{align}
Using \eqref{sec2:e10}- \eqref{sec2:e13}, the $m$th-order deformation equation is simplified as
\begin{align}\label{sec2:e14}
\nonumber y_k(x)-\chi_{k} y_{k-1}(x)&=c_0\ \bigg[y_{k-1}(x)-(1-\chi_{k})\big[a+(b-a)x\big]\\
&~~~~~~~~~~~~~~~~~~-\frac{1}{\alpha(p_{k-1})} \int\limits_{0}^{1}G(x,s) \mathcal{H}_{k-1} ds\bigg].
\end{align}
Choosing  an initial guess $y_0(x)=a+(b-a)x$, the  components  $y_k$ are successively obtained. Hence, the $n$th-order approximate solution of the problem \eqref{sec2:e2} is given by
\begin{align}\label{sec2:e17}
\phi_n(x,c_0)=\sum_{k=0}^{n} y_k(x,c_0).
\end{align}
The optimal value of the parameter $c_0$ can be obtained by minimizing the squared residual of governing equation
\begin{align}\label{sec2:e18}
E_{n}(c_0)=  \int_{0}^{1}  \big(N[\phi_n(x,c_0)]\big)^2  dx.
\end{align}
However, the exact squared residual error is expensive to calculate when $n$ is large. So, we
approximate $E_n$ by  the discrete averaged residual error defined by
\begin{align}\label{sec2:e19}
E_{n}(c_0)\approx \frac{1}{M}\sum_{k=1}^{M}  \big(N[\phi_n(x_k,c_0)]\big)^2,
\end{align}
where  $x_k = kh,$ $k=1,2,...M$ and the optimal value $c_0$ is obtained by solving $\frac{dE_{n}}{dc_0}=0$.
\begin{remark}
\rm{By setting $c_0=-1$, the scheme \eqref{sec2:e14} reduces to the ADM with Green's function \cite{singh2014new,singh2015approximate}.}
\end{remark}

\section{Convergence analysis}
In this section, we establish the convergence of solution defined in \eqref{sec2:e17} of  integral \eqref{sec2:e2}. Let $\mathds{X}= \big(C[0,1], \|y\|\big)$ be a Banach space with $$\|y\|=\max_{ x\in [0,1]} |y(x)|,~y\in \mathds{X}.$$

\begin{theorem}\label{sec3:eq1}
\rm{Let $0 < \delta< 1$ and the solution components $y_0(x),y_1(x),y_2(x),\ldots$ obtained by \eqref{sec2:e17} satisfy the  condition:
$\exists~~ k_0 \in \mathbb{N}~~\forall~ k\geq k_0:~ \|y_{k+1}\|\leq \delta\|y_{k}\|,$ then the series solution $\sum_{k=0}^{\infty} y_k(x)$ is convergent.}
\end{theorem}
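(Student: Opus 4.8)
The plan is to reduce the statement to the convergence of a geometric series, exploiting completeness of the Banach space $\mathds{X}$. The idea is that the hypothesis $\|y_{k+1}\|\le \delta\|y_k\|$ for all $k\ge k_0$ forces the tail of the series $\sum_k y_k(x)$ to be dominated, term by term, by a convergent geometric series, so that the sequence of partial sums is Cauchy in $\mathds{X}$.

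First I would fix $k_0$ as in the hypothesis and prove, by a one-line induction on $m\ge 0$, that
\begin{align*}
\|y_{k_0+m}\|\le \delta^{m}\,\|y_{k_0}\|.
\end{align*}
The base case $m=0$ is trivial, and the inductive step is exactly one application of $\|y_{k_0+m+1}\|\le \delta\|y_{k_0+m}\|$. Next, denote by $S_n(x)=\sum_{k=0}^{n} y_k(x)$ the $n$th partial sum, i.e.\ the approximant $\phi_n$ of \eqref{sec2:e17}. For $n>l\ge k_0$ I would estimate, using the triangle inequality in $\|\cdot\|$ and the bound just established,
\begin{align*}
\|S_n-S_l\|=\Big\|\sum_{k=l+1}^{n} y_k\Big\|\le \sum_{k=l+1}^{n}\|y_k\|\le \|y_{k_0}\|\sum_{k=l+1}^{n}\delta^{\,k-k_0}\le \frac{\delta^{\,l+1-k_0}}{1-\delta}\,\|y_{k_0}\|,
\end{align*}
where in the last step I summed the geometric tail and used $0<\delta<1$. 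Since $\delta^{\,l+1-k_0}\to 0$ as $l\to\infty$, the right-hand side tends to $0$, so $(S_n)_{n\ge 0}$ is a Cauchy sequence in $\mathds{X}$.

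Finally, because $\mathds{X}=\big(C[0,1],\|\cdot\|\big)$ is complete, the Cauchy sequence $(S_n)$ converges in $\mathds{X}$, which is precisely the statement that $\sum_{k=0}^{\infty} y_k(x)$ converges (uniformly on $[0,1]$). The only point requiring a little care is that the ratio bound is assumed only for $k\ge k_0$, so the first $k_0$ terms must be peeled off; but these form a finite sum and do not affect convergence, which is why the Cauchy estimate is taken with $l\ge k_0$. I do not anticipate any genuine obstacle here: the argument is the standard comparison-with-a-geometric-series test adapted to a Banach-space-valued series, and the role of the hypothesis on the $y_k$ is simply to supply the geometric majorant.
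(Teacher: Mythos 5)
Your proof is correct and follows essentially the same route as the paper: establish the geometric bound $\|y_{k}\|\le \delta^{k-k_0}\|y_{k_0}\|$ for $k\ge k_0$, use the triangle inequality and the geometric tail to show the partial sums form a Cauchy sequence, and conclude by completeness of $\mathds{X}$. If anything, you are slightly more careful than the paper, which stops at ``Cauchy'' without explicitly invoking completeness to get convergence.
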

\begin{proof}
Define the sequence $\{\phi_n\}_{n=0}^{\infty}$ as,
\begin{align}\label{sec3:eq3}
\left\{
  \begin{array}{ll}
\phi_0=y_0(x)\\
\phi_1=y_0(x)+y_1(x)\\
\phi_2=y_0(x)+y_1(x)+y_2(x)\\
\vdots\\
\phi_n=y_0(x)+y_1(x)+y_2(x)+ \cdots +y_n(x)\\
\end{array} \right.
\end{align}
and we show that is a Cauchy sequence in the Banach space  $\mathds{X}$. For this purpose, consider
\begin{align*}
\|\phi_{n+1}-\phi_{n}\|&=\|y_{n+1}\|\leq \delta \|y_{n}\|\leq  \delta^2 \|y_{n-1}\|\leq \ldots \leq \delta^{n-k_0+1} \|y_{k_{0}}\|.
\end{align*}
For every $n,m\in \mathbb{N}$, $n\geq m >k_0$, we have
\begin{align}\label{sec3:eq4}
\nonumber \|\phi_{n}-\phi_{m}\|&=\|(\phi_n-\phi_{n-1})+(\phi_{n-1}-\phi_{n-2})+\cdots+(\phi_{m+1}-\phi_{m})\|\\
\nonumber&\leq \|\phi_n-\phi_{n-1}\|+\|\phi_{n-1}-\phi_{n-2}\|+\cdots+\|\phi_{m+1}-\phi_{m}\|\\
\nonumber &\leq   (\delta^{n-k_0} + \delta^{n-k_0-1} +\cdots+\delta^{m-k_0+1})\|y_{k_{0}}\|\\
&= \frac{1-\delta^{n-m}}{1-\delta}\delta^{m-k_{0}+1}   \|y_{k_{0}}\|
\end{align}
and since $0<\delta<1$ so it follows that
\begin{eqnarray}\label{sec3:eq5}
\lim_{n,m\rightarrow \infty} \|\phi_{n}-\phi_{m}\|=0.
\end{eqnarray}
Therefore, $\{\phi_n\}_{n=0}^{\infty}$ is a Cauchy sequence in the Banach space  $\mathds{X}$.
\end{proof}

\begin{theorem}\label{sec3:eq6}
\rm{Assume that the series solution $\sum_{k=0}^{\infty} y_k(x)$  converges  to $y(x)$. If the truncated
series $\phi_m(x,c_0)=\sum_{k=0}^{m} y_k(x,c_0)$ is used as an approximation to the solution $y(x)$, then the maximum absolute truncated error is estimated as
\begin{align}
\left|y(x)-\phi_m(x,c_0)\right|\leq\frac{\delta^{m-k_{0}+1} }{1-\delta}  \|y_{k_{0}}\|.
\end{align}
}
\end{theorem}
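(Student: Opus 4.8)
The plan is to deduce this estimate directly from the Cauchy-sequence bound already established in the proof of Theorem \ref{sec3:eq1}, by letting the upper index tend to infinity. Recall that for $n \ge m > k_0$ we derived
\begin{align*}
\|\phi_{n}-\phi_{m}\| \le \frac{1-\delta^{n-m}}{1-\delta}\,\delta^{m-k_{0}+1}\,\|y_{k_{0}}\|.
\end{align*}
First I would invoke the hypothesis that $\sum_{k=0}^{\infty} y_k(x)$ converges to $y(x)$, which means $\phi_n \to y$ in $\mathds{X}$, i.e. $\|\phi_n - y\| \to 0$ as $n \to \infty$.

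Next I would fix $m > k_0$ and pass to the limit $n \to \infty$ in the displayed inequality. Since $\|\cdot\|$ is continuous, the left-hand side converges to $\|y - \phi_m\|$; on the right-hand side, because $0 < \delta < 1$ we have $\delta^{n-m} \to 0$, so the factor $1-\delta^{n-m}$ increases to $1$. Hence
\begin{align*}
\|y - \phi_m\| \le \frac{\delta^{m-k_{0}+1}}{1-\delta}\,\|y_{k_{0}}\|.
\end{align*}
Finally, for any $x \in [0,1]$ we have $|y(x) - \phi_m(x,c_0)| \le \max_{x\in[0,1]}|y(x)-\phi_m(x)| = \|y - \phi_m\|$ by definition of the norm on $\mathds{X}$, which gives the claimed bound.

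I do not anticipate a genuine obstacle here: the statement is essentially the "tail estimate" corollary of the geometric-series argument from Theorem \ref{sec3:eq1}, and the only point requiring a word of care is the interchange of the limit with the norm, which is justified by continuity of the norm (equivalently, by the reverse triangle inequality). One should also note that the estimate is only meaningful for $m \ge k_0$, matching the regime in which the contraction hypothesis $\|y_{k+1}\| \le \delta\|y_k\|$ is assumed to hold; for $m < k_0$ the bound is vacuous or must be interpreted accordingly.
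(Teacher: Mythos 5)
Your proposal is correct and follows essentially the same route as the paper: both take the bound $\|\phi_n-\phi_m\|\leq\frac{1-\delta^{n-m}}{1-\delta}\delta^{m-k_0+1}\|y_{k_0}\|$ from the proof of Theorem \ref{sec3:eq1} and let $n\to\infty$, using $\phi_n\to y$ and $\delta^{n-m}\to 0$. Your added remarks on continuity of the norm, the passage from the sup-norm bound to the pointwise bound, and the restriction $m\geq k_0$ are minor clarifications the paper leaves implicit.
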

\begin{proof}
From Theorem \ref{sec3:eq1}, following inequality \eqref{sec3:eq4}, we have
\begin{align*}
\|\phi_{n}-\phi_m(x,c_0)\|\leq\frac{1-\delta^{n-m}}{1-\delta}\delta^{m-k_{0}+1}   \|y_{k_{0}}\|,
\end{align*}
for $n\geq m$. Now, as $n\rightarrow \infty$ then   $\phi_{n}\rightarrow y$ and $\delta^{n-m}\rightarrow0$. So,
\begin{align}\label{sec3:eq7}
\|y(x)-\phi_m(x,c_0)\|\leq\frac{\delta^{m-k_{0}+1}}{1-\delta}   \|y_{k_{0}}\|.
\end{align}
Theorems \ref{sec3:eq1} and \ref{sec3:eq6} together confirm that the convergence of series solution \eqref{sec2:e17}.
\end{proof}

\begin{theorem}\label{sec3:eq11}
\rm{If  the series solution $\sum_{k=0}^{\infty} y_k(x)$ converges  to  $y(x)$ then it must be a solution of  \eqref{sec2:e2}.}
\end{theorem}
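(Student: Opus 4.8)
The plan is to show that the limit $y(x) = \sum_{k=0}^{\infty} y_k(x)$ of the convergent series satisfies the integral equation \eqref{sec2:e2}. The starting point is the $k$th-order deformation equation \eqref{sec2:e14}, which upon summing over $k$ from $1$ to $n$ telescopes on the left-hand side. First I would write $\sum_{k=1}^{n}\big[y_k(x)-\chi_k y_{k-1}(x)\big]$ and, using the definition \eqref{sec2:e11} of $\chi_k$, observe that this sum collapses to $\phi_n(x,c_0) - y_0(x)$ (the $\chi_1 = 0$ term drops the would-be $y_0$ contribution, and every subsequent term cancels in pairs). On the right-hand side this produces $c_0 \sum_{k=1}^{n} R_k(\overrightarrow{y}_{k-1},x)$, so we obtain the identity
\begin{align*}
\phi_n(x,c_0) - y_0(x) = c_0 \sum_{k=1}^{n} R_k(\overrightarrow{y}_{k-1},x).
\end{align*}

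Next I would let $n \to \infty$. Since the series converges to $y(x)$ in the norm of $\mathds{X}$, the left side tends to $y(x) - y_0(x)$. For the right side, the key observation is that as $n\to\infty$ the partial sums $\phi_{n-1}$ converge to $y$, so I would argue that $\sum_{k=1}^{\infty} R_k(\overrightarrow{y}_{k-1},x) \to 0$ forces $c_0 \cdot 0$; more precisely, the standard HAM argument is that if $\sum y_k$ converges then $\lim_{k\to\infty} y_k = 0$, and rearranging \eqref{sec2:e14} together with the expression \eqref{sec2:e12} for $R_k$ shows that the series $\sum_{k=1}^{\infty} R_k$ telescopes against the convergent series $\sum y_k$ so that it sums to zero. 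Feeding this back, the cleanest route is instead to pass to the limit directly inside \eqref{sec2:e14}: because $y_k \to 0$ and $y_{k-1}\to 0$, the left side of \eqref{sec2:e14} vanishes in the limit, so $\lim_{k\to\infty} R_k(\overrightarrow{y}_{k-1},x) = 0$ (using $c_0 \neq 0$). Writing this out with \eqref{sec2:e12} gives
\begin{align*}
\lim_{k\to\infty}\left[ y_{k-1}(x) - \big(a+(b-a)x\big) - \frac{1}{\alpha(p_{k-1})}\int_0^1 G(x,s)\,\mathcal{H}_{k-1}\,ds\right] = 0
\end{align*}
for $k \geq 2$, where $p_{k-1} \to \int_0^1 y(s)\,ds =: p$ by continuity of integration and $\sum \mathcal{H}_{k-1} \to f(y(s))$ by the structure of the Taylor coefficients in \eqref{sec2:e13}.

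The cleanest and most self-contained argument, however, is the telescoping one: sum \eqref{sec2:e14} over $k$ and identify the right-hand side explicitly. Using \eqref{sec2:e12} and the definitions in \eqref{sec2:e13}, one checks that $\sum_{k=1}^{n} R_k(\overrightarrow{y}_{k-1},x)$ equals $\phi_{n-1}(x,c_0) - \big(a+(b-a)x\big) - \frac{1}{\alpha(p_{n-1})}\int_0^1 G(x,s)\big[\sum_{j=0}^{n-1}\mathcal{H}_j\big]ds$, since the $y_{k-1}$ terms assemble into $\phi_{n-1}$, the constant term appears exactly once (only $k=1$ has $\chi_k = 0$), and the $\mathcal{H}$-sums accumulate. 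Combining with the telescoped left side gives $\phi_n - y_0 = c_0\big[\phi_{n-1} - (a+(b-a)x) - \frac{1}{\alpha(p_{n-1})}\int_0^1 G\,(\sum \mathcal{H}_j)\,ds\big]$. With $y_0 = a+(b-a)x$, letting $n\to\infty$ and using $\phi_n, \phi_{n-1}\to y$, continuity of $\alpha$, $p_{n-1}\to \int_0^1 y\,ds$, and $\sum_{j=0}^{\infty}\mathcal{H}_j = f(y(s))$, the bracket must be zero, i.e. $y(x) = a+(b-a)x + \frac{1}{\alpha(p)}\int_0^1 G(x,s) f(y(s))\,ds$, which is \eqref{sec2:e2}.

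The main obstacle I anticipate is justifying the interchange of the infinite sum with the integral and, especially, with the nonlinear operations — namely that $\sum_{j=0}^{\infty}\mathcal{H}_j(x) = f(y(x))$ pointwise (indeed uniformly) and that $\frac{1}{\alpha(p_{n-1})} \to \frac{1}{\alpha(p)}$. The first requires that the $\mathcal{H}_j$ are precisely the Taylor coefficients of $f(\sum y_m q^m)$ at $q=1$ and that this Taylor series converges to $f(y)$; for polynomial $f$ (as in \eqref{sec1:e1}--\eqref{sec1:e2}) this is immediate, but in general one invokes the convergence hypothesis plus continuity/analyticity of $f$. The second follows from continuity of $\alpha$ together with $\|p_{n-1} - p\| = \|\int_0^1(\phi_{n-1}(s)-y(s))ds\| \le \|\phi_{n-1}-y\|\to 0$, provided $\alpha(p)\neq 0$, which holds since $\alpha$ is a positive function. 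I would state these as the standing regularity assumptions and otherwise treat the interchanges as routine consequences of uniform convergence on $[0,1]$.
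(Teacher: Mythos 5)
Your overall strategy is the paper's: telescope the $k$th-order deformation equation \eqref{sec2:e10}, sum the $R_k$, and pass to the limit using $\sum_k \mathcal{H}_k \to f(y)$ and continuity of $\alpha$. But the load-bearing telescoping step is computed incorrectly, and the error propagates into your final deduction. Since $\chi_1=0$ and $\chi_k=1$ for $k\geq 2$, the sum $\sum_{k=1}^{n}\bigl[y_k(x)-\chi_k y_{k-1}(x)\bigr]$ is $y_1+(y_2-y_1)+\cdots+(y_n-y_{n-1})=y_n(x)$, \emph{not} $\phi_n(x,c_0)-y_0(x)=\sum_{k=1}^{n}y_k(x)$; if "every subsequent term cancels in pairs," as you correctly say, you cannot be left with the full partial sum. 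This matters. With the correct value, the left side tends to $0$ (the terms of a convergent series tend to zero), so $c_0\sum_{k=1}^{\infty}R_k(\overrightarrow{y}_{k-1},x)=0$, and since $c_0\neq 0$ the sum of the $R_k$ vanishes --- which, after identifying $\sum_{k=1}^{\infty}R_k$ with $y(x)-\bigl[a+(b-a)x\bigr]-\frac{1}{\alpha(p)}\int_0^1 G(x,s)f(y(s))\,ds$, is exactly \eqref{sec2:e2}. With your value the identity reads $y-y_0=c_0\bigl[\,\cdots\bigr]$ in the limit, and the assertion "the bracket must be zero" is a non sequitur: you would instead obtain $(1-c_0)(y-y_0)=-\tfrac{c_0}{\alpha(p)}\int_0^1 G(x,s)f(y(s))\,ds$, which is not the integral equation for any admissible $c_0$. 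The correct argument is in fact already present in the middle of your second paragraph (the series $\sum R_k$ telescopes against the convergent series $\sum y_k$ and so sums to zero); the fix is simply to discard the "cleanest and most self-contained" final version and run with that, which is precisely what the paper does via its equations \eqref{sec3:eq13}--\eqref{sec3:eq16}.

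Two smaller remarks. First, your intermediate suggestion to pass to the limit inside \eqref{sec2:e14} term by term only yields $\lim_k R_k=0$, which does not recover the integral equation (for $k\geq 2$ the constant term carries the factor $1-\chi_k=0$, so that limit says nothing about $a+(b-a)x$); only the summed identity does the job, so that detour should be cut. Second, your closing paragraph on interchanging the infinite sum with the integral, with $f$, and with $\alpha$ is more careful than the paper, which at this step writes the dubious expression $1/\sum_{k=1}^{\infty}\alpha(p_{k-1})$ and appeals to \cite{cherruault1989convergence}; flagging $\sum_j\mathcal{H}_j\to f(y)$ and $\alpha(p_{k-1})\to\alpha(p)$ as standing hypotheses is a genuine (if modest) improvement in rigor over the source.
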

\begin{proof}
Since the series  $\sum_{k=0}^{\infty} y_k(x)$  is convergent, then
\begin{align}\label{sec3:eq12}
\lim_{n\rightarrow\infty} y_n(x)=0,~~~~ \forall~~x\in[0,1].
\end{align}
By summing up the left hand-side of   \eqref{sec2:e10}, we get
\begin{align}\label{sec3:eq13}
\sum_{k=1}^{n}[y_k(x)-\chi_{k} y_{k-1}(x)]=y_1(x)+\ldots+(y_n(x)-y_{n-1}(x))=y_n(x).
\end{align}
Letting $n\rightarrow\infty$ and using \eqref{sec3:eq12}, equation \eqref{sec3:eq13} reduces to
\begin{align}\label{sec3:eq14}
\sum_{k=1}^{\infty}(y_k(x)-\chi_{k} y_{k-1}(x))=0.
\end{align}
Using \eqref{sec3:eq14}  and right hand-side of the relation    \eqref{sec2:e10},  we obtain
\begin{align}\label{sec3:eq15}
\sum_{k=1}^{\infty} c_0\ R_k(\overrightarrow{y}_{k-1},x)=\sum_{k=1}^{\infty}(y_k(x)-\chi_{k} y_{k-1}(x))=0.
\end{align}
Since $c_0\neq 0$, then equation \eqref{sec3:eq15} reduces to
\begin{align}\label{sec3:eq16}
\sum_{k=1}^{\infty} \ R_k(\overrightarrow{y}_{k-1},x)=0.
\end{align}
 Using \eqref{sec3:eq16} and  \eqref{sec2:e10}, we have
\begin{align*}
0=\sum_{k=1}^{\infty} \ R_k(\overrightarrow{y}_{k-1},x)&=\sum_{k=1}^{\infty} \bigg[y_{k-1}(x)-(1-\chi_{k})\big[a+(b-a)x\big]-\frac{1}{\alpha(p_{k-1})}\int\limits_{0}^{1}G(x,s) \mathcal{H}_{k-1} ds\bigg]\\
 &=  \sum_{k=1}^{\infty} y_{k-1}(x)-\big[a+(b-a)x\big]-\frac{1}{ \displaystyle \sum_{k=1}^{\infty}\alpha(p_{k-1})}\int\limits_{0}^{1}G(x,s)   \sum_{k=1}^{\infty} \mathcal{H}_{k-1}  ds,
  \end{align*}
since $\sum_{k=0}^{\infty} y_k(x)$  converges to $y(x)$, then  $\sum_{k=0}^{\infty} \mathcal{H}_{k}\rightarrow f(y(x))$ and $\sum_{k=1}^{\infty}\alpha(p_{k-1})\rightarrow\alpha (p)$ \cite{cherruault1989convergence}, we obtain
\begin{align*}
y(x)=a+(b-a)x+\frac{1}{\alpha (p)} \int\limits_{0}^{1}G(x,s) f(y(s))  ds.
\end{align*}
Hence, $y(x)$ is the exact solution of  integral equation \eqref{sec2:e2}.
\end{proof}

\section{Numerical results}
In this section, four examples are
discussed and the results are compared with existing exact solutions. We define the absolute  errors as
  \begin{align*}
 E_{n}(x):= |y(x)-\phi_{n}(x)|,~~~~~~e_{n}(x):= |y(x)-\psi_{n}(x)|
\end{align*}
where $ \phi_{n}(x)$ and $\psi_{n}(x)$ are OHAM and  ADM solutions, respectively.
\begin{example}\label{sec4:ex1}
\rm{Consider the  special case of linear nonlocal elliptic boundary value problem \eqref{sec1:e1} with $\alpha(p)=p^{1/3}$ as
\begin{align}
\left\{
  \begin{array}{ll}
\displaystyle  p^{1/3} y''(x)=\frac{6}{\sqrt[3]{4}}x,~~~~~~~~x\in (0,1)\\
y(0)=0,~~~~y(1)=1,~~~~~p=\bigg(\int\limits_{0}^{1} y(s)ds\bigg).
\end{array} \right.
\end{align}
Its exact solution is $y(x)=x^3$.}
\end{example}
Applying the OHAM \eqref{sec2:e14}  to the example \eqref{sec4:ex1}, we have
\begin{align}\label{sec4:ex2}
y_k(x)-\chi_{k} y_{k-1}(x)=c_0\ \bigg[y_{k-1}(x)-(1-\chi_{k})y_0(x)-\frac{1}{\alpha(p_{k-1})} \int\limits_{0}^{1}G(x,s) \frac{6s}{\sqrt[3]{4}} ds\bigg].
\end{align}
Using \eqref{sec4:ex2} with an initial approximation $y_0=x$, we obtain $\phi_2(x,c_0)$. With the help of \eqref{sec2:e19}, the optimal value of the parameter $c_0$ is computed as $c_0=-0.505595$, and hence the OHAM solution is given by
\begin{align}
\phi_2(x)= 7.7\times 10^{-16} x+x^3.
\end{align}
A comparison among the numerical results obtained by OHAM solution $\phi_{2}(x)$, ADM solution $\psi_{2}(x)$  and the exact solution $y(x)$ is depicted in Table \ref{tab1}.

\begin{table}[htbp]
\caption{Numerical results of example   \ref{sec4:ex1}}
\vspace{-0.3cm}
\centering
\setlength{\tabcolsep}{0.08in}
\begin{tabular}{l|ccc| cc}
\hline
\cline{1-6}
$x$ & $y(x)$  & $\psi_{2}(x,-1)$ & $\phi_{2}(x,-0.505595)$  & $|y(x)-\psi_{2}(x)|$ & $|y(x)-\phi_{2}(x)|$ \\
\cline{1-6}
0.0	&	0.000	&	0.0000000000	&	0.000	&	0.000000000	&	0.00000000	\\
0.1	&	0.001	&	-0.062559671	&	0.001	&	0.063559671	&	7.67615E-17	\\
0.2	&	0.008	&	-0.115267240    &	0.008	&	0.123267240	&	1.49186E-16	\\
0.3	&	0.027	&	-0.148270607	&	0.027	&	0.175270607	&	2.11636E-16	\\
0.4	&	0.064	&	-0.151717670	&	0.064	&	0.215717670	&	2.49800E-16	\\
0.5	&	0.125	&	-0.115756328	&	0.125	&	0.240756328	&	2.77556E-16	\\
0.6	&	0.216	&	-0.030534480	&	0.216	&	0.246534480	&	3.05311E-16	\\
0.7	&	0.343	&	0.1137999760	&	0.343	&	0.229200024	&	2.77556E-16	\\
0.8	&	0.512	&	0.3270991400	&	0.512	&	0.184900860	&	2.22045E-16	\\
0.9	&	0.729	&	0.6192151140	&	0.729	&	0.109784886	&	1.11022E-16	\\
1.0	&	1.000	&	1.0000000000	&	1.000	&	2.22045E-16	&	0.000000000	\\
\hline
\end{tabular}
\label{tab1}
\end{table}

\begin{example}\label{sec4:ex3}
\rm{Consider the special case of nonlinear nonlocal elliptic boundary value problem  \eqref{sec1:e2} with $\alpha(p)=\frac{1}{p}$ as
\begin{align}
\left\{
  \begin{array}{ll}
\displaystyle  -\frac{1}{p}y''(x)+\frac{3}{4(2\sqrt{2}-2)}y^5(x)=0,~~~~~~~~x\in (0,1)\\
y(0)=1,~~~~y(1)=\frac{\sqrt{2}}{2},~~~~~p=\bigg(\int\limits_{0}^{1} y(s)ds\bigg).
\end{array} \right.
\end{align}
Its  exact solution is $y(x)=\frac{1}{\sqrt{1+x}}$.}
\end{example}
Applying the OHAM \eqref{sec2:e14} to the example \eqref{sec4:ex3}, we have
\begin{align}\label{sec4:ex4}
y_k(x)-\chi_{k} y_{k-1}(x)=c_0\ \bigg[y_{k-1}(x)-(1-\chi_{k})y_0(x)-\frac{1}{\alpha(p_{k-1})} \int\limits_{0}^{1}G(x,s) \mathcal{H}_{k-1} ds\bigg].
\end{align}
Using \eqref{sec4:ex4} with an initial guess $y_0=1+(\frac{\sqrt{2}}{2}-1)x$ and \eqref{sec2:e19} with $c_0=-0.819014$, we obtain the homotopy optimal solution as
\begin{align*}
\phi_2(x)&=1-0.4973x+0.3737x^2-0.3060 x^3+0.2235x^4-0.1258 x^5+0.05148x^6\\
&~~~-0.0153 x^7+0.0033x^8-0.00055x^9+0.0000646x^{10}+\cdots
\end{align*}
A comparison among the numerical solution obtained by OHAM solution $\phi_{2}(x)$, ADM solution $\psi_{2}(x)$  and the exact solution is depicted in Table \ref{tab2}.

\begin{table}[htbp]
\caption{Numerical results of example   \ref{sec4:ex3}}
\vspace{-0.3cm}
\centering
\setlength{\tabcolsep}{0.08in}
\begin{tabular}{l|ccc| cc}
\hline
\cline{1-6}
$x$ & $y(x)$  & $\psi_{2}(x,c_0=-1)$ & $\phi_{2}(x,-0.819014)$  & $|y(x)-\psi_{2}(x)|$ & $|y(x)-\phi_{2}(x)|$ \\
\cline{1-6}
0.0	&	1.000000000	&	1.000000000	&	1.000000000	&	0.000000000	&	0.000000000	\\
0.1	&	0.953462589	&	0.954516555	&	0.953715758	&	0.001053966	&	0.000253169	\\
0.2	&	0.912870929	&	0.914909713	&	0.913348055	&	0.002038784	&	0.000477126	\\
0.3	&	0.877058019	&	0.879819116	&	0.877702187	&	0.002761097	&	0.000644168	\\
0.4	&	0.845154255	&	0.848286083	&	0.845886767	&	0.003131828	&	0.000732512	\\
0.5	&	0.816496581	&	0.819638873	&	0.817233417	&	0.003142293	&	0.000736836	\\
0.6	&	0.790569415	&	0.793406404	&	0.791235825	&	0.002836989	&	0.000666410	\\
0.7	&	0.766964989	&	0.769254375	&	0.767504100	&	0.002289386	&	0.000539111	\\
0.8	&	0.745355992	&	0.746938889	&	0.745731089	&	0.001582896	&	0.000375097	\\
0.9	&	0.725476250	&	0.726273545	&	0.725667948	&	0.000797295	&	0.000191698	\\
1.0	&	0.707106781	&	0.707106781	&	0.707106781	&	0.000000000	&	2.22045E-16	\\
\hline
\end{tabular}
\label{tab2}
\end{table}



\begin{example}\label{sec4:ex5}
\rm{Consider the special case of nonlinear nonlocal elliptic boundary value problem \eqref{sec1:e2} with $\alpha(p)=p$  as
\begin{align}
\left\{
  \begin{array}{ll}
\displaystyle  -p \;y''(x)+\frac{3 (2\sqrt{2}-2)}{4}y^5(x)=0,~~~~~~~~x\in (0,1)\\
y(0)=1,~~~~y(1)=\frac{\sqrt{2}}{2},~~~~~p=\bigg(\int\limits_{0}^{1} y(s)ds\bigg).
\end{array} \right.
\end{align}
Its exact solution is $y(x)=\frac{1}{\sqrt{1+x}}$.}
\end{example}
Applying the OHAM \eqref{sec2:e14} with $y_0=1+(\frac{\sqrt{2}}{2}-1)x$, we have
\begin{align}\label{sec4:ex6}
y_k(x)-\chi_{k} y_{k-1}(x)=c_0\ \bigg[y_{k-1}(x)-(1-\chi_{k})y_0(x)-\frac{1}{\alpha(p_{k-1})} \int\limits_{0}^{1}G(x,s) \mathcal{H}_{k-1} ds\bigg].
\end{align}
Using \eqref{sec4:ex6} and \eqref{sec2:e19} with $c_0=-0.933697$, we obtain the homotopy optimal solution as
\begin{align*}
\phi_{2}(x)&=1-0.495211 x+0.362361 x^2-0.280911 x^3+0.195035 x^4-0.107115 x^5+0.043453 x^6\\
&~~~~~~-0.01294 x^7+0.002854 x^8-0.000464 x^9+0.000054 x^{10}-\cdots
\end{align*}
A comparison among the numerical solution obtained by OHAM solution $\phi_{2}(x)$, ADM $\psi_{2}(x)$  and the exact solution is depicted in Table \ref{tab3}.

\begin{table}[htbp]
\caption{Numerical results of example \ref{sec4:ex5}}
\vspace{-0.3cm}
\centering
\setlength{\tabcolsep}{0.08in}
\begin{tabular}{l|ccc| cc}
\hline
\cline{1-6}
$x$ & $y(x)$  & $\psi_{2}(x,-1)$ & $\phi_{2}(x,-0.933697)$  & $|y(x)-\psi_{2}(x)|$ & $|y(x)-\phi_{2}(x)|$ \\
\cline{1-6}
0.0	&	1.000000000	&	1.000000000	&	1.000000000	&	0.000000000	&	0.000000000	\\
0.1	&	0.953462589	&	0.954139200	&	0.953840089	&	0.000676611	&	0.000377500	\\
0.2	&	0.912870929	&	0.914044009	&	0.913485384	&	0.001173080	&	0.000614454	\\
0.3	&	0.877058019	&	0.878552757	&	0.877813154	&	0.001494738	&	0.000755135	\\
0.4	&	0.845154255	&	0.846797874	&	0.845969674	&	0.001643619	&	0.000815419	\\
0.5	&	0.816496581	&	0.818128147	&	0.817301384	&	0.001631566	&	0.000804803	\\
0.6	&	0.790569415	&	0.792049712	&	0.791302438	&	0.001480297	&	0.000733023	\\
0.7	&	0.766964989	&	0.768181846	&	0.767575192	&	0.001216857	&	0.000610203	\\
0.8	&	0.745355992	&	0.746224331	&	0.745800843	&	0.000868338	&	0.000444851	\\
0.9	&	0.725476250	&	0.725933776	&	0.725717917	&	0.000457526	&	0.000241667	\\
1	&	0.707106781	&	0.707106781	&	0.707106781	&	0.000000000	&	2.22045E-16	\\
\hline
\end{tabular}
\label{tab3}
\end{table}


\begin{example}\label{sec4:ex7}
\rm{Consider the special case of nonlinear nonlocal elliptic boundary value problem \eqref{sec1:e2} with $\alpha(p)=\bigg(\frac{1}{p}\bigg)^{2}$  as
\begin{align}
\left\{
  \begin{array}{ll}
\displaystyle  -\bigg(\frac{1}{p}\bigg)^{2} y''(x)+\frac{2}{(\ln2)^2}y^3(x)=0,~~~~~~~~x\in (0,1)\\
y(0)=1,~~y(1)=\frac{1}{2},~~~~~p=\bigg(\int\limits_{0}^{1} y(s)ds\bigg).
\end{array} \right.
\end{align}
Its exact solution is $y(x)=\frac{1}{1+x}$.}
\end{example}
Applying the OHAM \eqref{sec2:e14} with $y_0=1+(\frac{1}{2}-1)x$, we have
\begin{align}\label{sec4:ex8}
y_k(x)-\chi_{k} y_{k-1}(x)=c_0\ \bigg[y_{k-1}(x)-(1-\chi_{k})y_0(x)-\frac{1}{\alpha(p_{k-1})} \int\limits_{0}^{1}G(x,s) \mathcal{H}_{k-1} ds\bigg].
\end{align}
Using \eqref{sec4:ex8} and \eqref{sec2:e19} with $c_0=-0.612671$, we obtain the homotopy optimal solution as
\begin{align*}
\phi_2(x)&=1-1.00399 x+0.995127 x^2-0.90086 x^3+0.655261 x^4-0.338986 x^5+0.115228 x^6\\
&~~~~~~-0.0246916 x^7+0.00308645 x^8-0.00017147 x^9+\cdots
\end{align*}
A comparison among the numerical solution obtained by OHAM solution $\phi_{2}(x)$, ADM solution $\psi_{2}(x)$  and the exact solution is depicted in Table \ref{tab4}.

\begin{table}[htbp]
\caption{Numerical results of example  \ref{sec4:ex7}}
\vspace{-0.3cm}
\centering
\setlength{\tabcolsep}{0.08in}
\begin{tabular}{l|ccc| cc}
\hline
\cline{1-6}
$x$ & $y(x)$  & $\psi_{2}(x,-1)$ & $\phi_{2}(x,-0.612671)$  & $|y(x)-\psi_{2}(x)|$ & $|y(x)-\phi_{2}(x)|$ \\
\cline{1-6}
0.0	&	1.000000000	&	1.000000000	&	1.000000000	&	0.000000000	&	0.000000000	\\
0.1	&	0.909090909	&	0.914550054	&	0.908713383	&	0.005459145	&	0.000377526	\\
0.2	&	0.833333333	&	0.844849352	&	0.832746652	&	0.011516019	&	0.000586682	\\
0.3	&	0.769230769	&	0.785573674	&	0.768603045	&	0.016342905	&	0.000627724	\\
0.4	&	0.714285714	&	0.733317575	&	0.713705107	&	0.019031861	&	0.000580607	\\
0.5	&	0.666666667	&	0.686029523	&	0.666157571	&	0.019362857	&	0.000509096	\\
0.6	&	0.625000000	&	0.642573190	&	0.624561470	&	0.017573190	&	0.000438530	\\
0.7	&	0.588235294	&	0.602393994	&	0.587870965	&	0.014158699	&	0.000364329	\\
0.8	&	0.555555556	&	0.565272431	&	0.555285414	&	0.009716875	&	0.000270141	\\
0.9	&	0.526315789	&	0.531148042	&	0.526170109	&	0.004832252	&	0.000145681	\\
1.0	&	0.500000000	&	0.500000000 &	0.500000000 &	6.66134E-16 &	1.66533E-16	\\
\hline
\end{tabular}
\label{tab4}
\end{table}


\section{Conclusion}
We presented the OHAM with Green's function technique for obtaining numerical solutions
to a class of nonlinear, nonlocal, elliptic boundary value problems. We first transformed the given nonlocal boundary value problems into an equivalent integral equation, and then used the OHAM to obtain accurate solutions. Unlike the ADM, the  OHAM  always gives better convergent series solution as  shown in Tables. The numerical experiments confirm the reliability of the approach as it handles such nonlocal elliptic differential equations without imposing limiting assumptions that could change the physical structure of the solution. In addition, the convergence and error analysis was presented. The proposed scheme will be used further in studying identical applications.


\begin{thebibliography}{10}
\expandafter\ifx\csname url\endcsname\relax
  \def\url#1{\texttt{#1}}\fi
\expandafter\ifx\csname urlprefix\endcsname\relax\def\urlprefix{URL }\fi
\expandafter\ifx\csname href\endcsname\relax
  \def\href#1#2{#2} \def\path#1{#1}\fi

\bibitem{khuri2014variational}
S.~Khuri, A.M. Wazwaz, A variational approach for a class of nonlocal elliptic
  boundary value problems, Journal of Mathematical Chemistry 52~(5) (2014)
  1324--1337.

\bibitem{cannon2008numerical}
J.~R. Cannon, D.~J. Galiffa, et~al., A numerical method for a nonlocal elliptic
  boundary value problem, Journal of Integral Equations and Applications 20~(2)
  (2008) 243--261.

\bibitem{cannon2011numerical}
J.~R. Cannon, D.~J. Galiffa, On a numerical method for a homogeneous,
  nonlinear, nonlocal, elliptic boundary value problem, Nonlinear Analysis:
  Theory, Methods \& Applications 74~(5) (2011) 1702--1713.

\bibitem{themistoclakis2015numerical}
W.~Themistoclakis, A.~Vecchio, On the numerical solution of some nonlinear and
  nonlocal boundary value problems, Applied Mathematics and Computation 255
  (2015) 135--146.

\bibitem{luongo1998non}
A.~Luongo, G.~Piccardo, Non-linear galloping of sagged cables in 1: 2 internal
  resonance, Journal of Sound and Vibration 214~(5) (1998) 915--940.

\bibitem{luongo2008continuous}
A.~Luongo, G.~Piccardo, A continuous approach to the aeroelastic stability of
  suspended cables in 1: 2 internal resonance, Journal of Vibration and Control
  14~(1-2) (2008) 135--157.

\bibitem{stanczy2001nonlocal}
R.~Sta{\'n}czy, Nonlocal elliptic equations, Nonlinear Analysis: Theory,
  Methods \& Applications 47~(5) (2001) 3579--3584.

\bibitem{singh2014new}
R.~Singh, G.~Nelakanti, J.~Kumar, A new efficient technique for solving
  two-point boundary value problems for integro-differential equations, Journal
  of Mathematical Chemistry 52~(8) (2014) 2030--2051.

\bibitem{singh2015approximate}
R.~Singh, G.~Nelakanti, J.~Kumar, Approximate solution of two-point boundary
  value problems using Adomian decomposition method with Green's function,
  Proceedings of the National Academy of Sciences, India Section A: Physical
  Sciences 85~(1) (2015) 51--61.

\bibitem{liao2007general}
S.~Liao, Y.~Tan, A general approach to obtain series solutions of nonlinear
  differential equations, Studies in Applied Mathematics 119~(4) (2007)
  297--354.

\bibitem{liao2009series}
S.~Liao, Series solution of nonlinear eigenvalue problems by means of the
  homotopy analysis method, Nonlinear Analysis: Real World Applications 10~(4)
  (2009) 2455--2470.

\bibitem{cherruault1989convergence}
Y.~Cherruault, Convergence of Adomian's method, Kybernetes 18~(2) (1989)
  31--38.

\end{thebibliography}

\end{document}